\newtheorem{theorem}{Theorem}
\newtheorem{corollary}[theorem]{Corollary}
\newtheorem{example}[theorem]{Example}
\title{Graph Operations and Neighborhood Polynomials}
\author{Maryam Alipour\footnote{This research has been funded by the European Social Fund (ESF).} and Peter Tittmann\\University of Applied Sciences Mittweida}
\begin{document}
	
	\maketitle
	
	\begin{abstract}
		The neighborhood polynomial of graph $G$ is the generating function for 
		the number of vertex subsets of $G$ of which the vertices have a common 
		neighbor in $G$. 
		In this paper, we investigate the behavior of this polynomial under several 
		graph operations. Specifically, we provide an explicit formula for the 
		neighborhood polynomial of the graph obtained from a given graph $G$ by 
		vertex attachment. We use this result to propose a recursive algorithm
		for the calculation of the neighborhood polynomial. Finally, we prove
		that the neighborhood polynomial can be found in polynomial-time in the 
		class of $k$-degenerate graphs.		
	\end{abstract}

\section{Introduction}\label{sect:intro}

All graphs considered in this paper are simple, finite, and undirected. Let $G=(V,E)$ be a graph where $V$ is its vertex set and $E$ 
its edge set, suppose $v\in V$ be a vertex of $G$. The \emph{open neighborhood} 
of $v$, denoted by $N(v)$, is the set of all vertices that are adjacent to $v$,
\[ 
N(v)=\{u\mid (u,v)\in E\}.
\]

The \emph{neighborhood complex} of graph $G$, denoted by $\mathcal{N}(G)$, has been
introduced in \cite{Lovasz1978}. It is the family of all those vertex subsets 
that have a common neighbor in $G$. In other 
words, $\mathcal{N}(G)$ is the family of all subsets of open neighborhoods of 
vertices of graph $G$,
\[ 
\mathcal{N}(G) = \{A \mid A\subseteq V , \exists v\in V : A\subseteq N(v) \}.
\]

In the following, we list some properties of the neighborhood complex $\mathcal{N}(G)$. The proofs can be found in \cite{Brown2008}:
\begin{itemize}
	\item If $G$ does not contain any isolated vertices, then for any vertex 
	$v\in V$, we have $\{v\}\in \mathcal{N}(G)$,
	\item If $A\in \mathcal{N}(G)$ and $B\subseteq A$, then $B\in \mathcal{N}(G)$,
	\item Let $G'$ be the graph obtained from $G$ by adding some isolated vertices, 
	then $\mathcal{N}(G') = \mathcal{N}(G)$. 
\end{itemize}

The \textit{neighborhood polynomial} of graph $G$, denoted by $N(G,x)$, is the ordinary generating function for the 
neighborhood complex of $G$. It has been introduced in \cite{Brown2008} and is defined as follows:
\begin{equation}
N(G,x) = \sum_{U\in \mathcal{N}(G)} x^{|U|}
\label{eq:1}
\end{equation}

Suppose $|V|=n$, and let $n_k(G)=|\{A \mid A\in \mathcal{N}(G),|A|=k\}|$, then we 
can rephrase the Equation~(\ref{eq:1}) as follows:
\[ 
N(G,x)=\sum_{k=0}^{n} n_{k}(G)x^k.
\]

The neighborhood polynomial of a graph is of special interest as it has a close relation to the domination
polynomial of a graph. A \emph{dominating set} of a graph $G=(V,E)$ is a vertex set $W\subseteq V$ such that
the closed neighborhood of $W$ is equal to V, where the \emph{closed neighborhood} is defined by
\[
N[W] = \bigcup_{w\in W}N(w) \cup W.
\]
We denote by $\mathcal{D}(G)$ the family of all dominating sets of a graph $G$.
The \emph{domination polynomial} of a graph, introduced in \cite{Arocha2000}, is
\[
D(G,x) = \sum_{W\in\mathcal{D}(G)}x^{|W|}.
\]
For further properties of the domination polynomial, see \cite{Akbari2010,Dod2016,Dohmen2012,Kotek2012,Kotek2014}.
It has been observed in \cite{Brouwer2009} that a vertex set $W$ of $G$ belongs to the neighborhood complex
of $G$ if and only if $W$ is non-dominating in the complement $(\bar {G})$ of $G$, which implies
\[
D(G,x) + N((\bar {G}),x) = (1+x)^{|V|}.
\]
A proof of this relation is given in \cite{Heinrich2017}.

In Section \ref{sect:operations} of this paper, we investigate the effect of several 
graph operations on the neighborhood polynomial of a graph. The vertex (or edge) addition plays an important role in this context.

In Section \ref{sect:degenerate}, we present a recursion formula for the neighborhood 
polynomial of a graph based of deleting vertices. Finally, we apply this recursion 
to several graph classes and prove that the neighborhood polynomial of \emph{planar} 
graphs can be computed efficiently.

\section{Graph Operations}\label{sect:operations}

Having defined a graph polynomial, one of the first natural problems is its 
calculation. Often local graph operations, like edge or vertex deletions, prove
useful. In addition, global operations, like complementation of forming the line
graph, might be beneficial. Finally, graph products, for instance the disjoint union,
the join, or the Cartesian product, can be employed to simplify graph polynomial
calculations. 

The  \emph{disjoint union} of two graphs $G_{1}=(V_{1},E_{1})$ and $G_{2}=(V_{2},E_{2})$ with disjoint vertex sets $V_1$ and $V_2$, denoted by $G_1\cup G_2$ is a graph with the vertex set $V_{1}\cup V_2$, and the edge set $E_{1}\cup E_2$.

\begin{theorem}[\cite{Brown2008}]
	Let $G_1$ and $G_2$ be simple undirected graphs. The disjoint union $G_1\cup G_2$  
	satisfies
	\begin{equation}
	N(G_1\cup G_{2},x) = N(G_{1},x) + N(G_{2},x) - 1.
	\end{equation}
\end{theorem}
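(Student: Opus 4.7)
The plan is to prove the identity at the level of the neighborhood complexes first, then translate to generating functions. The key structural observation is that in $G_1 \cup G_2$ there are no edges between $V_1$ and $V_2$, so for every vertex $v$ of $G_1 \cup G_2$ the open neighborhood $N(v)$ lies entirely in $V_1$ (if $v \in V_1$) or entirely in $V_2$ (if $v \in V_2$). Moreover, the neighborhoods taken in $G_1 \cup G_2$ coincide with those taken inside the respective component.

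From this I would deduce that any $A \in \mathcal{N}(G_1 \cup G_2)$, being a subset of some $N(v)$, is a subset of $V_1$ or a subset of $V_2$, and hence belongs to $\mathcal{N}(G_1)$ or $\mathcal{N}(G_2)$. The reverse inclusions are immediate since adding an isolated component does not destroy any common neighbor. Therefore
\[
\mathcal{N}(G_1 \cup G_2) \;=\; \mathcal{N}(G_1) \cup \mathcal{N}(G_2).
\]

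Next I would compute the intersection. Any nonempty $A \in \mathcal{N}(G_1) \cap \mathcal{N}(G_2)$ would have to satisfy $\emptyset \neq A \subseteq V_1 \cap V_2 = \emptyset$, a contradiction; on the other hand, the empty set is trivially a subset of every neighborhood and hence lies in both complexes. Thus $\mathcal{N}(G_1) \cap \mathcal{N}(G_2) = \{\emptyset\}$.

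Finally, applying inclusion--exclusion in the generating function $\sum_{U} x^{|U|}$ yields
\[
N(G_1 \cup G_2, x) \;=\; N(G_1,x) + N(G_2,x) - \sum_{U \in \{\emptyset\}} x^{|U|} \;=\; N(G_1,x) + N(G_2,x) - 1,
\]
which is the desired identity. I do not anticipate a serious obstacle: the only point requiring a little care is making explicit that neighborhoods computed in the disjoint union agree with those computed in the individual components, so that both inclusions $\mathcal{N}(G_1 \cup G_2) \supseteq \mathcal{N}(G_i)$ and the converse hold without any hidden assumption on the presence of edges or isolated vertices.
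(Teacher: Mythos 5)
Your proof is correct. The paper itself states this result without proof (citing Brown et al.), so there is nothing to diverge from; your argument --- showing $\mathcal{N}(G_1\cup G_2)=\mathcal{N}(G_1)\cup\mathcal{N}(G_2)$ because every open neighborhood in the disjoint union lies entirely within one component, computing $\mathcal{N}(G_1)\cap\mathcal{N}(G_2)=\{\emptyset\}$ from the disjointness of the vertex sets, and applying inclusion--exclusion to the generating function --- is the standard one, and it is exactly the technique the paper itself uses later for the matching-edge-cut decomposition (Theorem \ref{theo:cutmatch}). The only caveat, which the paper also glosses over, is the degenerate case of a graph with empty vertex set, where $\mathcal{N}(G)$ contains nothing (not even $\emptyset$) and the formula's ``$-1$'' correction would fail; for nonempty $G_1$ and $G_2$ your argument is complete.
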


The \emph{join} of two graphs $G_{1}=(V_{1},E_1)$ and $G_{2}=(V_{2},E_2)$ with 
disjoint vertex sets $V_1$ and $V_2$, denoted by $G_{1}+G_{2}$, is the disjoint 
union of $G_1$ and $G_2$, together with all those edges that join vertices in $V_1$ 
to vertices in $V_2$.

\begin{theorem}[\cite{Brown2008}]
	Let $G_{1} = (V_{1},E_{1})$ and $G_{2} = (V_{2},E_{2})$ be simple undirected 
	graphs. The neighborhood polynomial of the join $G_{1} + G_{2}$ of these two 
	graphs satisfies
	\begin{align*}
		N(G_{1}+G_{2},x) &= (1+x)^{|V_{1}|}N(G_{2},x) + (1+x)^{|V_{2}|}N(G_{1},x)\\
		&- N(G_{1},x)N(G_{2},x)
	\end{align*}
\end{theorem}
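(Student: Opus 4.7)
The plan is to decompose each subset $A\subseteq V_1\cup V_2$ as $A=A_1\cup A_2$ with $A_i=A\cap V_i$, and then characterize membership in $\mathcal{N}(G_1+G_2)$ purely in terms of the pieces $A_1,A_2$.

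First I would observe that in the join, every vertex $v\in V_1$ has $N_{G_1+G_2}(v)=N_{G_1}(v)\cup V_2$, and symmetrically for $v\in V_2$. Consequently $A=A_1\cup A_2$ is contained in $N_{G_1+G_2}(v)$ for some $v\in V_1$ if and only if $A_1\in\mathcal{N}(G_1)$ (the part $A_2$ being automatically absorbed by $V_2$); likewise membership witnessed by some $v\in V_2$ amounts to $A_2\in\mathcal{N}(G_2)$. Therefore
\[
A\in\mathcal{N}(G_1+G_2)\quad\Longleftrightarrow\quad A_1\in\mathcal{N}(G_1)\;\text{ or }\;A_2\in\mathcal{N}(G_2).
\]

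Next I would apply inclusion-exclusion to the generating function. The set of $A$ with $A_1\in\mathcal{N}(G_1)$ (and $A_2\subseteq V_2$ unrestricted) contributes $N(G_1,x)\,(1+x)^{|V_2|}$; the set with $A_2\in\mathcal{N}(G_2)$ contributes $(1+x)^{|V_1|}N(G_2,x)$; and their intersection, where both $A_1\in\mathcal{N}(G_1)$ and $A_2\in\mathcal{N}(G_2)$, contributes $N(G_1,x)\,N(G_2,x)$ because the subsets of $V_1$ and $V_2$ are chosen independently and $|A|=|A_1|+|A_2|$ factors the monomial $x^{|A|}$. Subtracting the overlap yields the claimed identity.

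The main step that requires care is the biconditional in the characterization above: one has to verify that no witness vertex $v$ outside the two families we considered can arise, and that nothing is lost when $A_1$ or $A_2$ is empty. The only potential subtlety is the trivial case where one of $V_1,V_2$ is empty, in which the formula still holds because $N(G_i,x)=0$ for $V_i=\emptyset$; otherwise the argument is entirely routine, since the characterization is symmetric in $V_1,V_2$ and the generating-function bookkeeping is just the product rule $x^{|A_1\cup A_2|}=x^{|A_1|}x^{|A_2|}$ for disjoint unions.
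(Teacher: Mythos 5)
Your argument is correct: the key observation that a witness $v\in V_1$ for $A=A_1\cup A_2$ exists exactly when $A_1\in\mathcal{N}(G_1)$ (since $N_{G_1+G_2}(v)=N_{G_1}(v)\cup V_2$ absorbs $A_2$ for free), followed by inclusion--exclusion on the two witness types, is exactly the natural proof of this identity, and your handling of the degenerate case $V_i=\emptyset$ is sound. The paper itself gives no proof of this statement --- it is quoted from the cited reference --- but your derivation is the standard one and I see no gap.
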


The \emph{Cartesian product} of graphs $G_{1}=(V_{1},E_1)$ and $G_{2}=(V_{2},E_2)$ 
with disjoint vertex sets $V_1$ and $V_2$, denoted by $G_{1} \square G_2$, is a 
graph with vertex set $V_{1}\times V_{2}=\{(u,v) \mid u\in V_{1}, v\in V_{2}\}$, 
where the vertices $x=(x_{1},x_{2})$ and $y=(y_{1},y_{2})$ are adjacent in 
$G_{1} \square G_2$, if and only if $[x_{1}=y_{1}\text{ and }\{x_{2},y_{2}\}\in E_{2}]$ 
or $[x_{2}=y_{2}\text{ and } \{x_{1},y_{1}\}\in E_{1}]$.

\begin{theorem}[\cite{Brown2008}]\label{theo:cartesian}
	Let $G_{1}=(V_{1},E_{1})$ and $G_{2}=(V_{2},E_{2})$ be simple undirected 
	graphs, then
	\begin{align*}
		N(G_{1}\square G_{2},x) &= 1 + |V_{1}|(N(G_{2},x)-1) 
			+ |V_{2}|(N(G_{1},X)-1) \\
			&+ \sum _{(u,v)\in V(G_{1}\square G_2)} ((1+x)^{|N_{G_{1}}(u)|}-1)((1+x)^{|N_{G_{2}}(v)|}-1)\\
			&- |V_1||V_2|x - 2|E_1||E_2|x^2.
	\end{align*}
\end{theorem}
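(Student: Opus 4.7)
The plan is to partition $\mathcal{N}^*(G_1 \square G_2) := \mathcal{N}(G_1 \square G_2) \setminus \{\emptyset\}$ by how the elements of each set sit in the coordinate grid $V_1 \times V_2$, using the disjoint decomposition $N_{G_1 \square G_2}((u,v)) = (\{u\} \times N_{G_2}(v)) \sqcup (N_{G_1}(u) \times \{v\})$ of the neighborhood of a product vertex. Since adding or removing isolated vertices does not change the neighborhood complex (as noted in the introduction), I may assume without loss of generality that neither $G_1$ nor $G_2$ has an isolated vertex. I would split $\mathcal{N}^*$ into the \emph{horizontal} family $\mathcal{A} = \{\{u\} \times W : u \in V_1,\, W \in \mathcal{N}^*(G_2)\}$, the \emph{vertical} family $\mathcal{B} = \{T \times \{v\} : v \in V_2,\, T \in \mathcal{N}^*(G_1)\}$, and the \emph{mixed} family $\mathcal{C} = \mathcal{N}^* \setminus (\mathcal{A} \cup \mathcal{B})$, whose members share neither a common first nor a common second coordinate. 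A short check (using the characterization that $\{u\} \times W \in \mathcal{N}^*$ forces the common neighbor to have first coordinate $u$ once $|W| \geq 2$) gives generating functions $|V_1|(N(G_2,x)-1)$ for $\mathcal{A}$ and $|V_2|(N(G_1,x)-1)$ for $\mathcal{B}$, while $\mathcal{A} \cap \mathcal{B}$ consists of all singletons of $V_1 \times V_2$, contributing $|V_1||V_2|\,x$. Inclusion--exclusion then yields the first four terms of the claimed formula once the contribution of $\mathcal{C}$ is pinned down.

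For the mixed family I would reinterpret the sum
\[
S := \sum_{(u,v) \in V_1 \times V_2} \bigl((1+x)^{|N_{G_1}(u)|}-1\bigr)\bigl((1+x)^{|N_{G_2}(v)|}-1\bigr)
\]
as a generating function indexed by pairs $(U,(u,v))$ with $U = U_A \sqcup U_B$, $\emptyset \neq U_A \subseteq \{u\}\times N_{G_2}(v)$, and $\emptyset \neq U_B \subseteq N_{G_1}(u)\times \{v\}$, where the weight is $x^{|U_A|+|U_B|}$. Equivalently, $(u,v)$ ranges over the common neighbors of $U \in \mathcal{C}$, so $S = \sum_{U \in \mathcal{C}} \mu(U)\, x^{|U|}$, with $\mu(U)$ the number of common neighbors of $U$. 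The crucial step, and the main obstacle of the whole argument, is to show that $\mu(U) = 1$ whenever $|U| \geq 3$ and $\mu(U) = 2$ whenever $|U| = 2$. If $|U_A| \geq 2$ relative to some common neighbor $(u,v)$, two distinct elements of $U_A$ share the first coordinate $u$; because a vertex $(u,b) \in N_{G_1 \square G_2}((u',v'))$ can satisfy $b = v'$ for at most one $b$, every other common neighbor $(u',v')$ must satisfy $u' = u$, after which the existence of $(a,v) \in U_B$ with $a \sim u$ in $G_1$ forces $v' = v$; the case $|U_B| \geq 2$ is symmetric. The only remaining possibility is $|U_A| = |U_B| = 1$, i.e.\ $U = \{(u_1,v_1),(u_2,v_2)\}$ with $u_1 u_2 \in E_1$ and $v_1 v_2 \in E_2$, and a direct check exhibits exactly two common neighbors $(u_1,v_2)$ and $(u_2,v_1)$.

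Counting the 2-element mixed sets is then routine: each pair of edges $\{u_1,u_2\} \in E_1$ and $\{v_1,v_2\} \in E_2$ induces a 4-cycle $\{(u_i,v_j) : i,j \in \{1,2\}\}$ in $G_1 \square G_2$, and its two antidiagonals $\{(u_1,v_1),(u_2,v_2)\}$ and $\{(u_1,v_2),(u_2,v_1)\}$ are exactly the $\mathcal{C}$-sets of size two, giving $2|E_1||E_2|$ such sets in total. Therefore $\sum_{U \in \mathcal{C}} x^{|U|} = S - 2|E_1||E_2|\,x^2$, and adding the contributions from $\emptyset$, $\mathcal{A} \cup \mathcal{B}$, and $\mathcal{C}$ via inclusion--exclusion yields the claimed identity. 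Everything outside the case analysis for $\mu(U)$ is straightforward bookkeeping.
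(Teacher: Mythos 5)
The paper cites this theorem from Brown and Nowakowski and gives no proof of its own, so there is nothing internal to compare against; your argument has to stand on its own merits, and for the most part it does. The decomposition into row sets $\mathcal{A}$, column sets $\mathcal{B}$, and mixed sets $\mathcal{C}$, the identification of $S$ as $\sum_{U\in\mathcal{C}}\mu(U)\,x^{|U|}$ with $\mu(U)$ the number of common neighbors, and the case analysis giving $\mu(U)=1$ for $|U|\geq 3$ and $\mu(U)=2$ for exactly the $2|E_1||E_2|$ diagonal pairs of product $4$-cycles are all correct --- I checked the forcing argument for $|U_A|\geq 2$ and the enumeration of common neighbors of a diagonal pair, and the bookkeeping assembles into the stated identity.

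The one step that does not hold up as written is the ``without loss of generality'' reduction to factors with no isolated vertices. The fact quoted from the introduction says that deleting an isolated vertex of $G_1$ leaves $\mathcal{N}(G_1)$ unchanged; but it deletes an entire copy of $G_2$ (a whole component) from $G_1\square G_2$, so it certainly changes $\mathcal{N}(G_1\square G_2)$, and the two sides of the identity do not transform the same way. Indeed the stated formula is simply false in the presence of isolated vertices: for $G_1=K_1$ one has $G_1\square G_2\cong G_2$, yet the right-hand side evaluates to $N(G_2,x)-|V_2|x$ because $N(K_1,x)=1$ kills the second and fourth terms while the $-|V_1||V_2|x$ correction survives. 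So the no-isolated-vertices hypothesis cannot be discharged by a reduction; it must be carried as a standing assumption of the theorem (it is needed precisely where you use that every singleton of $V_1\times V_2$ lies in $\mathcal{A}\cap\mathcal{B}$, so that $-|V_1||V_2|x$ is the correct inclusion--exclusion term). With that hypothesis made explicit rather than ``assumed WLOG,'' your proof is complete; without it, no proof of the literal statement exists because the statement fails.
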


\subsection{Cut Vertices}

In this section, we consider connected graphs with cut vertices and prove that there 
exists an interesting relation between the neighborhood polynomial of a graph $G$ 
with the neighborhood polynomials of its split components. 

Let $G=(V,E)$ be a simple undirected graph and let $v\in V$. By $G-v$ we denote the 
subgraph of $G$ induced by $V\setminus \{v\}$.
A vertex $v$ in a connected graph $G$ is called a \emph{cut vertex} (or an 
\emph{articulation}) of $G$ if $G-v$ is disconnected. More generally, a vertex 
$v$ is a cut vertex of a graph $G$ if $v$ is a cut vertex of a component of $G$.

Let $v$ be a cut vertex of the graph $G=(V,E)$ and suppose $G-v$ has two components. Then we can find two subgraphs
$G_{1}=(V_{1},E_{1})$ and $G_{2}=(V_{2},E_{2})$ of $G$ such that
\begin{equation*}
	V_{1}\cup V_{2}=V,\qquad V_{1}\cap V_{2}=\{v\}, \qquad E_{1}\cup E_{2}=E, 
	\qquad E_{1}\cap E_{2}=\emptyset.
\end{equation*}
We call the graphs $G_1$ and $G_2$ the \emph{split components} of $G$.

\begin{theorem}\label{cut vertex}
Let $G=(V,E)$ be a simple connected graph where $v\in V$ is a 
	cut vertex of $G$ such that $G-v$ has two components. Let $G_{1}=(V_{1},E_{1})$ 
	and $G_{2}=(V_{2},E_{2})$ be the split components of $G$. Then
	the neighborhood polynomial of $G$ can be computed by
	\begin{align*}
		N(G,x) &= N(G_{1},x) + N(G_{2},x) - (1+x) \\
		&+ ((1+x)^{|N_{G_{1}}(v)|}-1)((1+x)^{|N_{G_{2}}(v)|}-1).  
	\end{align*}
\end{theorem}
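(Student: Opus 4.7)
The plan is to cover $\mathcal{N}(G)$ by three subfamilies according to where a common neighbor of each admissible set can be found, and then apply three-set inclusion--exclusion. Specifically, I would define
\begin{align*}
\mathcal{N}^{(1)} &= \{A\subseteq V : A\subseteq N_G(u)\text{ for some }u\in V_1\setminus\{v\}\},\\
\mathcal{N}^{(2)} &= \{A\subseteq V : A\subseteq N_G(u)\text{ for some }u\in V_2\setminus\{v\}\},\\
\mathcal{N}^{(v)} &= \{A\subseteq V : A\subseteq N_G(v)\},
\end{align*}
so that $\mathcal{N}(G) = \mathcal{N}^{(1)}\cup\mathcal{N}^{(2)}\cup\mathcal{N}^{(v)}$. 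Writing $W(\mathcal{F}) := \sum_{A\in\mathcal{F}} x^{|A|}$, the goal is to evaluate $W(\mathcal{N}(G))$ through this inclusion--exclusion.

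The first step is to exploit the cut-vertex structure. For $u\in V_i\setminus\{v\}$ the neighborhood $N_G(u) = N_{G_i}(u)$ lies entirely in $V_i$, and the sets $N_{G_1}(v)$ and $N_{G_2}(v)$ are disjoint, so $W(\mathcal{N}^{(v)}) = (1+x)^{|N_{G_1}(v)|+|N_{G_2}(v)|}$. The intersection $\mathcal{N}^{(1)}\cap\mathcal{N}^{(2)}$ forces $A\subseteq V_1\cap V_2 = \{v\}$, and both $\emptyset$ and $\{v\}$ actually lie there because $v$ has neighbors on each side of the cut, contributing $1+x$; the triple intersection additionally requires $v\notin A$ and so contributes $1$.

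The key step is to identify the mixed intersections $\mathcal{N}^{(i)}\cap\mathcal{N}^{(v)}$ with cross terms of a smaller inclusion--exclusion. Applied inside $G_1$, the same covering idea yields $\mathcal{N}(G_1) = \mathcal{N}^{(1)}\cup 2^{N_{G_1}(v)}$. A short set-theoretic check shows $\mathcal{N}^{(1)}\cap 2^{N_{G_1}(v)} = \mathcal{N}^{(1)}\cap\mathcal{N}^{(v)}$: both equal the family of $A\subseteq N_{G_1}(v)$ that have a common neighbor in $V_1\setminus\{v\}$, since inside $\mathcal{N}^{(1)}\cap\mathcal{N}^{(v)}$ the constraints $A\subseteq V_1$ and $A\subseteq N_{G_1}(v)\cup N_{G_2}(v)$ together force $A\subseteq N_{G_1}(v)$. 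Two-set inclusion--exclusion on the cover of $\mathcal{N}(G_1)$ then gives
\[W(\mathcal{N}^{(1)}) - W(\mathcal{N}^{(1)}\cap\mathcal{N}^{(v)}) = N(G_1,x) - (1+x)^{|N_{G_1}(v)|},\]
and an analogous identity holds for $G_2$.

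Substituting these identities into the three-set expansion of $W(\mathcal{N}(G))$ reduces the task to verifying
\[N(G,x) = N(G_1,x) + N(G_2,x) + (1+x)^{a+b} - (1+x)^a - (1+x)^b - x,\]
with $a = |N_{G_1}(v)|$ and $b = |N_{G_2}(v)|$, after which the algebraic identity $(1+x)^{a+b} - (1+x)^a - (1+x)^b + 1 = ((1+x)^a-1)((1+x)^b-1)$ delivers the claimed formula. The main obstacle I expect is the careful set-theoretic identification of $\mathcal{N}^{(1)}\cap\mathcal{N}^{(v)}$ with the cross term from the smaller inclusion--exclusion for $N(G_1,x)$; once that match is made, the resulting cancellation is clean and the remaining simplification is purely algebraic.
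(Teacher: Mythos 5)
Your proposal is correct, and I verified the bookkeeping: the three-set inclusion--exclusion over the cover $\mathcal{N}^{(1)}\cup\mathcal{N}^{(2)}\cup\mathcal{N}^{(v)}$, the identification $N_G(u)=N_{G_i}(u)$ for $u\in V_i\setminus\{v\}$, the values $W(\mathcal{N}^{(1)}\cap\mathcal{N}^{(2)})=1+x$ and $W(\mathcal{N}^{(1)}\cap\mathcal{N}^{(2)}\cap\mathcal{N}^{(v)})=1$ (both relying on $v$ having a neighbor in each component, which connectivity guarantees), and the final algebraic identity all check out. However, you organize the argument differently from the paper. The paper \emph{partitions} the admissible sets $X$ by their location: either $X\subseteq V_1$ or $X\subseteq V_2$ (contributing $N(G_1,x)+N(G_2,x)-(1+x)$, with the $1+x$ correcting the double count of $\emptyset$ and $\{v\}$), or $X$ meets both $V_1\setminus\{v\}$ and $V_2\setminus\{v\}$, in which case the only possible common neighbor is $v$ and $X$ must be a subset of $N_{G_1}(v)\cup N_{G_2}(v)$ hitting both parts, which is counted directly by the product $((1+x)^{|N_{G_1}(v)|}-1)((1+x)^{|N_{G_2}(v)|}-1)$. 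You instead \emph{cover} $\mathcal{N}(G)$ by families indexed by the location of a witness common neighbor and run a full inclusion--exclusion, so the product term emerges only after the algebraic recombination $(1+x)^{a+b}-(1+x)^a-(1+x)^b+1$. The paper's partition is shorter and makes the product term combinatorially transparent; your version costs the extra step of matching the mixed intersections $\mathcal{N}^{(i)}\cap\mathcal{N}^{(v)}$ with the cross terms of the covers of $\mathcal{N}(G_1)$ and $\mathcal{N}(G_2)$ (which is the one genuinely delicate point, and you handle it correctly), but in exchange it is more systematic and generalizes more readily -- it is essentially the template the paper itself uses later for independent vertex cuts of size $k$, where the inclusion--exclusion over subsets of the separator becomes unavoidable.
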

\begin{proof}
	Let $X$ be a vertex subset of $G$ with $X\in \mathcal{N}(G)$, which implies 
	that the vertices of $X$ have a common neighbor in $G$. Then we can distinguish 
	the following cases.
	\begin{enumerate}
		\item[(a)] Assume $X\subseteq V_{1}$ or $X\subseteq V_{2}$. Then all 
		possibilities for the selection of $X$ are generated by the polynomial 
		$N(G_{1},x) + N(G_{2},x) - (1+x)$, in which 
		we prevent the double-counting of the empty set and the vertex $v$ by 
		subtracting $1+x$ .
		\item[(b)]  Now assume that $X$ contains at least one vertex from each of 
		$V_{1} \setminus \{v\}$ and $V_{2} \setminus \{v\}$. In this case, we need 
		to count all subsets of the open neighborhood of $v$ in $G$ which include 
		at least one vertex from each of the open neighborhoods of $v$ in $G_{1}$ 
		and $G_{2}$, which is performed by the generating function 
		$((1+x)^{|N_{G_{1}}(v_{1})|}-1)((1+x)^{|N_{G_{2}}(v_{2})|}-1)$.
	\end{enumerate}
\end{proof}

Let $G=(V,E)$ be a simple undirected connected graph. A \emph{vertex cut} (or a \emph{separator}) is a set 
of vertices of $G$ which, if removed together with any incident edges, the remaining graph is disconnected. 
More generally, a vertex subset $W$ is a vertex cut if $W$ is a vertex cut of a component of $G$.\\

Let $W\subseteq V$ be a vertex cut of $G$ which is an independent set in $G$ such that $G-W$ has two components, 
where $G-W$ denotes the subgraph of $G$ induced by $V\setminus W$. We can find two subgraphs 
$G_{1}=(V_{1},E_{1})$ and $G_{2}=(V_{2},E_{2})$ of $G$ such that
\begin{equation*}
	V_{1}\cup V_{2}=V,\qquad V_{1}\cap V_{2}=W, \qquad E_{1}\cup E_{2}=E, 
	\qquad E_{1}\cap E_{2}=\emptyset.
\end{equation*}
We call the graphs $G_{1}$ and $G_{2}$ the \emph{split components} of $G$.
In the following theorem we prove that similar to theorem \ref{cut vertex}, there is a relation between the 
neighborhood polynomial of a graph containing a vertex cut and the neighborhood polynomials of its split 
components.
\begin{theorem}
 Let $G=(V,E)$ be a simple connected graph, $W\subseteq V$ a vertex cut with $|W|=k$ that is also an 
 independent set in $G$ such that $G-W$ has two components. Let $G_{1}=(V_{1},E_{1})$ and $G_{2}=(V_{2},E_{2})$ 
 be the split components 
 of $G$. Then
    \begin{align*}
        N(G,x) &= N(G_{1},x) + N(G_{2},x)  - \sum _{U\subseteq W} A_{U} \\
        &+ \sum _{\substack{U\subseteq W \\ U\neq \emptyset}} (-1)^{|U|+1} ((1+x)^{|\bigcap _{u\in U}N_{G_{1}}(u)|}-1)((1+x)^{|\bigcap _{u\in U}N_{G_{2}}(u)|}-1)
    \end{align*}
where
\begin{equation*}
		A_{U} = \left\lbrace
			\begin{array}{l}
				x^{|U|}\hspace{2mm} \text{ if } \bigcap_{u\in U}N_{G_{1}}(u) \neq \emptyset \hspace{2mm}\text{and}\hspace{2mm} \bigcap_{u\in U}N_{G_{2}}(u) \neq \emptyset,\\
				0 \text{ otherwise.}		
			\end{array}
		\right.
	\end{equation*}
\end{theorem}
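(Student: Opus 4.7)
The plan is to extend the proof of Theorem~\ref{cut vertex} to an independent vertex cut $W$, classifying each $X \in \mathcal{N}(G)$ by the location of its common neighbor in $G$ and applying inclusion--exclusion over the subsets of $W$. The structural fact that drives everything is that, because $W$ is independent, every $u \in W$ satisfies $N_G(u) = N_{G_1}(u) \sqcup N_{G_2}(u)$ with $N_{G_i}(u) \subseteq V_i \setminus W$, while every $v \in V_i \setminus W$ satisfies $N_G(v) = N_{G_i}(v) \subseteq V_i$. Consequently $\mathcal{N}(G_1) \cup \mathcal{N}(G_2) \subseteq \mathcal{N}(G)$, and any $X \in \mathcal{N}(G) \setminus (\mathcal{N}(G_1) \cup \mathcal{N}(G_2))$ must sit inside $V \setminus W$, have all of its common neighbors in $W$, and meet both sides $V_1 \setminus W$ and $V_2 \setminus W$.

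First I would account for $\mathcal{N}(G_1) \cup \mathcal{N}(G_2)$ via the generating-function analogue of $|A \cup B| = |A| + |B| - |A \cap B|$. A subset $X \in \mathcal{N}(G_1) \cap \mathcal{N}(G_2)$ is forced to lie in $V_1 \cap V_2 = W$ and must admit a common neighbor in each of $G_1$ and $G_2$; since $W$ is independent, no such common neighbor can itself lie in $W$ (unless $X = \emptyset$), so the condition reduces to $\bigcap_{u \in X} N_{G_1}(u) \neq \emptyset$ and $\bigcap_{u \in X} N_{G_2}(u) \neq \emptyset$, which is precisely the defining condition for $A_X \neq 0$. Hence the generating polynomial of $\mathcal{N}(G_1) \cap \mathcal{N}(G_2)$ equals $\sum_{U \subseteq W} A_U$, with the term $A_\emptyset = 1$ correctly handling the double-counted empty set.

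Next I would enumerate the remaining $X \in \mathcal{N}(G) \setminus (\mathcal{N}(G_1) \cup \mathcal{N}(G_2))$ by inclusion--exclusion on the non-empty subsets $U \subseteq W$ of certified common neighbors in $W$. Writing $Y_i = X \cap (V_i \setminus W)$, both $Y_1$ and $Y_2$ are non-empty, and $u \in W$ is a common neighbor of $X$ precisely when $Y_1 \subseteq N_{G_1}(u)$ and $Y_2 \subseteq N_{G_2}(u)$. The disjoint decomposition $\bigcap_{u \in U} N_G(u) = \bigcap_{u \in U} N_{G_1}(u) \sqcup \bigcap_{u \in U} N_{G_2}(u)$ shows that the generating polynomial of pairs $(Y_1, Y_2)$ of non-empty subsets with $Y_i \subseteq \bigcap_{u \in U} N_{G_i}(u)$ equals $((1+x)^{|\bigcap_{u \in U} N_{G_1}(u)|}-1)((1+x)^{|\bigcap_{u \in U} N_{G_2}(u)|}-1)$. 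Summing over $\emptyset \neq U \subseteq W$ with signs $(-1)^{|U|+1}$ then collapses, for each individual $X$, to the indicator that $X$ admits at least one common neighbor in $W$, and this yields the second sum in the stated formula.

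The main obstacle I expect is the bookkeeping in this last step: one must verify that the $((1+x)^{|\cdot|}-1)$ factors, which enforce the non-emptiness of $Y_1$ and $Y_2$, combine with the M\"obius signs on $W$ to carve out precisely the ``extra'' subsets and never re-introduce any element of $\mathcal{N}(G_1) \cup \mathcal{N}(G_2)$. The independence of $W$ is used essentially in two places -- to split $N_G(u)$ as a disjoint union for $u \in W$, and to guarantee that any $X$ whose only common neighbors lie in $W$ must avoid $W$ itself -- and both uses are indispensable for the formula to take its stated form.
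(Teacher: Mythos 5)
Your proposal is correct and follows essentially the same route as the paper: split $\mathcal{N}(G)$ into the sets lying in $\mathcal{N}(G_1)\cup\mathcal{N}(G_2)$ (with $\sum_{U\subseteq W}A_U$ as the generating polynomial of the intersection $\mathcal{N}(G_1)\cap\mathcal{N}(G_2)\subseteq 2^W$) and the sets meeting both $V_1\setminus W$ and $V_2\setminus W$, handled by inclusion--exclusion over the subsets of $W$ of common neighbors. Your write-up actually supplies details the paper leaves implicit, in particular the disjoint decomposition $N_G(u)=N_{G_1}(u)\sqcup N_{G_2}(u)$ for $u\in W$ and the verification that the signed sum collapses to an indicator for each straddling set $X$.
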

\begin{proof}
    Let $X$ be a vertex subset of $G$ with $X\in \mathcal{N}(G)$, which implies 
	that the vertices of $X$ have a common neighbor in $G$. Then we can distinguish 
	the following cases.
	\begin{enumerate}
		\item[(a)] If $X\subseteq V_{1}$ or $X\subseteq V_{1}$, all possibilities to select $X$ are generated 
		by the polynomial 
		\[
		N(G_{1},x) + N(G_{2},x) - \sum _{U\subseteq W} A_{U},
		\]
		in which we avoid any subset of the set $W$, having common neighbors in 
		both split components $G_{1}$ and $G_{2}$, to be counted more than once by subtracting 
		$\sum _{U\subseteq W} A_{U}$.
		\item[(b)] Now assume that $X$ contains at least one vertex from each of 
		$V_{1}\setminus W$ and $V_{2}\setminus W$. In this case, we generate all subsets of the open neighborhoods of vertices of $W$ by the generating function 
		\[
		\sum _{\substack{U\subseteq W \\ U\neq \emptyset}} (-1)^{|U|+1} ((1+x)^{|\bigcap _{u\in U}N_{G_{1}}(u)|}-1)((1+x)^{|\bigcap _{u\in U}N_{G_{2}}(u)|}-1)
		\]
		where by applying the principle of inclusion-exclusion we prevent any double counting. 
	\end{enumerate}	
\end{proof}

\subsection{Matching Edge Cuts}

\begin{figure}[ht]
	\centering
	\includegraphics[scale=0.9]{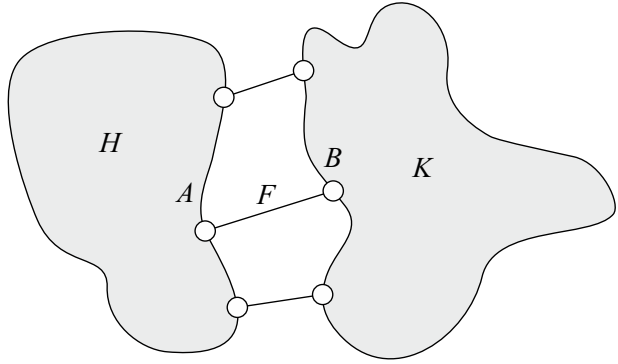}
	\caption{A graph with a matching cut}
	\label{fig:cutmatch}
\end{figure}
\begin{theorem}\label{theo:cutmatch}
	Let $F=\{\{a,b\} \mid a\in A, b\in B \}$ be a minimal cut of a connected graph 
	$G=(V,E)$ such that $F$ is a matching of $G$. The components of $G-F$ are denoted
	by $H$ and $K$. Assume that $A\subseteq V(H)$ and $B\subseteq V(K)$. We define
	$H'=H\cup (A\cup B,F)$ and $K'=K\cup (A\cup B,F)$. Then
	\[ 
	N(G,x) = N(H',x) + N(K',x) -2|F|x -1. 
	\]
\end{theorem}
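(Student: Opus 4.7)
My plan is an inclusion--exclusion at the level of set families: I want to show that $\mathcal{N}(G) = \mathcal{N}(H') \cup \mathcal{N}(K')$ and that the intersection $\mathcal{N}(H') \cap \mathcal{N}(K')$ consists of precisely the empty set together with the $2|F|$ singletons $\{a\}$, $\{b\}$ with $a \in A$, $b \in B$. Summing the two complex generating functions and subtracting the contribution $1 + 2|F|\,x$ of the intersection then yields the stated identity.

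The union equality is the easier direction and is driven entirely by the matching hypothesis. Because $F$ is a matching, every vertex $v \in V(H)$ has at most one neighbor in $V(K)$ in $G$, namely its unique $F$-partner (and only when $v \in A$); the construction $H' = H \cup (A \cup B, F)$ adds exactly that vertex to $v$'s neighborhood. Hence $N_G(v) = N_{H'}(v)$ for all $v \in V(H)$, and symmetrically on $K$. Consequently, any $X \in \mathcal{N}(G)$ witnessed by a common neighbor on the $H$-side belongs to $\mathcal{N}(H')$, and similarly on the $K$-side. The reverse inclusion uses the same identity, together with the observation that a common $H'$-neighbor $b \in B$ has only its $F$-partner $a$ as an $H'$-neighbor, so $X \subseteq \{a\} \subseteq N_G(b)$, and $X$ is already in $\mathcal{N}(G)$.

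For the intersection, any $X$ must lie in $V(H') \cap V(K') = A \cup B$. The empty set and the singletons of $A \cup B$ belong to both complexes, using the $F$-partner as a common neighbor in each augmented component. If $|X| \geq 2$, I would rule out membership in the intersection by cases on how $X$ meets $A$ and $B$. Two distinct $A$-vertices admit no common $K'$-neighbor, because in $K'$ each $a \in A$ is adjacent only to its unique $F$-partner and these partners differ; a symmetric argument handles two $B$-vertices via $\mathcal{N}(H')$. The remaining and most delicate case is a mixed pair $X = \{a,b\}$ with $a \in A$, $b \in B$: the candidate witnesses are then forced (the $F$-partner of $a$ on the $K'$-side and the $F$-partner of $b$ on the $H'$-side), and ruling out their simultaneous adjacency to both elements of $X$ is the principal obstacle, for which I expect to exploit the minimality of the matching cut $F$.

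Once the intersection is confirmed to have generating function $1 + 2|F|\,x$, inclusion--exclusion gives
\[
N(G,x) = N(H',x) + N(K',x) - 2|F|\,x - 1,
\]
as claimed.
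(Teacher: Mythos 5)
Your overall strategy is the same as the paper's: establish $\mathcal{N}(G)=\mathcal{N}(H')\cup\mathcal{N}(K')$ and then compute the generating function of the intersection $\mathcal{N}(H')\cap\mathcal{N}(K')$. Your treatment of the union, of the empty set and the singletons, and of sets containing two vertices of $A$ or two vertices of $B$ is correct, and indeed more careful than the paper's own proof, which simply asserts that the intersection consists of $\emptyset$ and the $2|F|$ singletons. However, the one case you leave open --- a mixed pair $X=\{a,b\}$ with $a\in A$, $b\in B$ --- is a genuine gap, and your hope that minimality of $F$ will close it cannot be realized. As your own analysis shows, the witnesses are forced, so $X$ lies in the intersection exactly when the $F$-partner $a'$ of $b$ satisfies $\{a,a'\}\in E(H)$ and the $F$-partner $b'$ of $a$ satisfies $\{b,b'\}\in E(K)$, i.e.\ exactly when the two $F$-edges $\{a,b'\}$ and $\{a',b\}$ lie on a common $4$-cycle of $G$. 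Minimality of the cut does not forbid this configuration. Take $G=C_4$ with vertices $a,a',b,b'$ and edges $\{a,a'\},\{a',b\},\{b,b'\},\{b',a\}$, and let $F=\{\{a,b'\},\{a',b\}\}$: this $F$ is a minimal cut and a matching, $H'$ and $K'$ are paths on four vertices with neighborhood polynomial $1+4x+2x^2$ each, and both mixed pairs $\{a,b\}$ and $\{a',b'\}$ belong to both complexes. The claimed formula yields $2(1+4x+2x^2)-4x-1=1+4x+4x^2$, whereas $N(C_4,x)=1+4x+2x^2$.

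So the step you flagged as ``the principal obstacle'' is not merely unfinished; it fails, and the theorem is false as stated. The paper's proof overlooks exactly the case you isolated. The statement needs an additional hypothesis --- for instance, that no two edges of $F$ lie on a common $4$-cycle, equivalently that there is no pair of $F$-edges $\{a,b'\},\{a',b\}$ with $\{a,a'\}\in E(H)$ and $\{b,b'\}\in E(K)$ --- and under such a hypothesis your case analysis does complete the proof. Credit to you for locating precisely the point where the argument breaks down.
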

Figure \ref{fig:cutmatch} illustrates the situation of Theorem \ref{theo:cutmatch}.
\begin{proof}
	The open neighborhoods of all vertices of $V(H)$ are the same in $G$ and in $H'$.
	We also have $N_G(v)=N_{K'}(v)$ for any $v\in V(K)$. This implies
	\[ 
	\mathcal{N}(G) = \mathcal{N}(H') \cup \mathcal{N}(K').
	\]
	Each singleton $\{v\}$ for $v\in A\cup B$ is contained in $\mathcal{N}(H')$ and in
	$\mathcal{N}(K')$, which yields $\mathcal{N}(H')\cap \mathcal{N}(K')=
	\{\{v\}\mid v\in A\cup B \}\cup\{\emptyset \}$.
\end{proof}

\subsection{Edge Addition}

Let $G=(V,E)$ be a graph, and $u,v\in V$ such that $\{u,v\}\notin E$. 
Let $G+uv = (V,E\cup \{\{u,v\} \})$ be the graph obtained from $G$ by insertion 
of the edge $\{u,v\}$. In this case, the neighborhoods of the two vertices 
$u,v$ have been changed. If there is a path of length 3 between the vertices 
$u$ and $v$ in $G$, after the edge insertion, a 4-cycle is being formed which 
causes some difficulties in counting  neighborhood sets. In 
Theorem \ref{theo:edge-insertion}, we suppose that there is no path of length 3 
between the vertices $u$ and $v$ to prevent any over-counting and relate 
the neighborhood polynomial of the new graph after addition the edge $\{u,v\}$ to 
the neighborhood polynomial of the original graph. In Theorem 
\ref{theo:edge-insertion-general}, we investigate the general case.

\begin{theorem}\label{theo:edge-insertion}
	Let $G=(V,E)$ be a graph, $u,v\in V$, and $\{u,v\}\notin E$.
	Suppose there is no path of length 3 between $u$ and $v$ in $G$. Let 
	$G^{\prime}=(V,E\cup \{\{u,v\}\})$. Then
	\begin{equation*}
		N(G^{\prime},x) = N(G,x) + x((1+x)^{|N_{G}(u)|}-1) + x((1+x)^{|N_{G}(v)|}-1).
	\end{equation*}
\end{theorem}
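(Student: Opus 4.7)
The plan is to compare $\mathcal{N}(G')$ with $\mathcal{N}(G)$ directly. Since inserting the edge $\{u,v\}$ changes only the neighborhoods of $u$ and $v$ -- namely $N_{G'}(u)=N_G(u)\cup\{v\}$, $N_{G'}(v)=N_G(v)\cup\{u\}$, and $N_{G'}(w)=N_G(w)$ for every other vertex $w$ -- every member of $\mathcal{N}(G)$ still lies in $\mathcal{N}(G')$, accounting for the summand $N(G,x)$. It therefore suffices to enumerate the \emph{new} sets in $\mathcal{N}(G')\setminus\mathcal{N}(G)$ by size.

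Any such $X$ must have $u$ or $v$ as a common neighbor in $G'$ that it did not have in $G$. If $u$ is the new common neighbor, then $X\subseteq N_G(u)\cup\{v\}$ and necessarily $v\in X$ (otherwise $X\subseteq N_G(u)$ would already lie in $\mathcal{N}(G)$); so $X=\{v\}\cup S$ with $\emptyset\neq S\subseteq N_G(u)$, and these sets contribute $x((1+x)^{|N_G(u)|}-1)$ to the generating function. The symmetric argument with the roles of $u$ and $v$ exchanged yields the second summand. Moreover, a set lying in both families would contain $u$ and $v$ yet be a subset of $N_G(u)\cup\{v\}$, forcing $u\in N_G(u)$ -- impossible in the absence of loops -- so the two contributions add without any correction.

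The main step, and the only place where the hypothesis is used, is the claim that no set of the form $\{v\}\cup S$ with $\emptyset\neq S\subseteq N_G(u)$ was already in $\mathcal{N}(G)$. Suppose for contradiction that $\{v\}\cup S\subseteq N_G(w)$ for some $w\in V$; picking any $s\in S$, the vertices $u,s,w,v$ form a walk of length three in $G$. A short case check -- $u\neq s$ and $w\neq v$ because $s\in N_G(u)$, $w\in N_G(v)$ forbid loops; $s\neq w$ because $w\in N_G(s)$; and $s\neq v$, $u\neq w$ because each would force $\{u,v\}\in E$ -- shows these four vertices are pairwise distinct, yielding a genuine path of length three from $u$ to $v$, contrary to assumption. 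Once this obstruction is dealt with, collecting the three summands produces the claimed identity.
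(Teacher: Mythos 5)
Your argument is correct and is essentially the paper's own proof, only more carefully executed: the same three summands, with the no-path-of-length-3 hypothesis invoked exactly where the paper invokes it, plus details the paper glosses over (that the length-3 walk $u,s,w,v$ is genuinely a path, and that the two families of new sets are disjoint so no correction term is needed). The only caveat --- shared with the paper's statement and proof --- is that your step forcing $S\neq\emptyset$ tacitly assumes $u$ and $v$ are non-isolated, since an isolated $v$ makes $\{v\}$ a new member of $\mathcal{N}(G')$ that the claimed formula fails to count.
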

\begin{proof}
	Observe that any vertex subset with a common neighbor in $G$ forms a 
	vertex subset with a common neighbor in $G^{\prime}$. Such vertex subsets 
	are generated by $N(G,x)$. The addition of the edge $\{u,v\}$ to $G$ 
	results in the fact that any non-empty subset of the open neighborhood of 
	$u$ together with $v$ forms a vertex subset with $u$ as the 
	common neighbor in $G^{\prime}$ and the same arguments applies to the 
	open neighborhood of $v$. Such subsets are generated by 
	$x((1+x)^{|N_{G}(u)|}-1) + x((1+x)^{|N_{G}(v)|}-1)$.
	
	Now, assume there is a vertex subset $W$ with a common neighbor in $G^{\prime}$ 
	and suppose it is counted once in 
	$x((1+x)^{|N_{G}(u)|}-1) + x((1+x)^{|N_{G}(v)|}-1)$ and once in 
	$N(G,x)$, and let $y\in W$. This implies that $W$, which is a subset of the open 
	neighborhood of one of the vertices $u$(or $v$) together with 
	$v$ (or $u$) have a common neighbor in $G$. Let $x$ be that common neighbor. The existence of such set results in the existence of a path of 
	length 3 between $u_{1}$ and $u_{2}$ through vertices $x$ and $y$ which 
	contradicts the assumption that there is no path of length 3 between those two 
	vertices in $G$. So there is no vertex subset with a common neighbor in 
	$G^{\prime}$ that is counted twice and this completes the proof.
\end{proof}

\begin{theorem}\label{theo:edge-insertion-general}
	Let $G=(V,E)$ be a graph, $u_{1},u_{2}\in V$, and 
	$(u_{1},u_{2})\notin E$. Suppose $u_{1}$ and $u_{2}$ are not isolated vertices 
	and let $G^{\prime}=(V,E\cup \{\{u_{1},u_{2}\}\})$. Then
	\begin{equation*}
		N(G^{\prime},x) = N(G,x) + x\hspace{-1em}\sum_{\substack{\emptyset \neq 
		U_{1}\subseteq N_{G}(u_{1})\\ 
			U_{1}\cap N_{G}(u_{2})=\emptyset}}\hspace{-1em} x^{|U_{1}|} 
		+ x \hspace{-1em}\sum_{\substack{\emptyset \neq U_{2}\subseteq N_{G}(u_{2})\\ 
		U_{2}\cap N_{G}(u_{1})=\emptyset}}\hspace{-1em} x^{|U_{2}|}.
	\end{equation*}
\end{theorem}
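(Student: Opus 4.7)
The plan is to mirror the proof of Theorem~\ref{theo:edge-insertion}, partitioning $\mathcal{N}(G')$ into the sets already contained in $\mathcal{N}(G)$ and the genuinely new sets produced by inserting $\{u_1,u_2\}$, and then carefully tracking the double counting that the general setting now permits. Since $u_1$ and $u_2$ are the only vertices whose open neighborhoods change, with $N_{G'}(u_i)=N_G(u_i)\cup\{u_{3-i}\}$, every $W\in\mathcal{N}(G')\setminus\mathcal{N}(G)$ must have $u_1$ or $u_2$ as a common neighbor in $G'$ and must contain the opposite endpoint of the new edge. So each such $W$ is of the form $\{u_2\}\cup U_1$ with $\emptyset\neq U_1\subseteq N_G(u_1)$ or $\{u_1\}\cup U_2$ with $\emptyset\neq U_2\subseteq N_G(u_2)$, and these two shapes are enumerated by generating functions of the type appearing in the theorem statement.

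The second step is to identify which candidate sets of these two shapes are genuinely new. Without the path-of-length-three hypothesis of Theorem~\ref{theo:edge-insertion}, a candidate $\{u_2\}\cup U_1$ may already be generated in $G$ by some other common neighbor $w\in N_G(u_2)$, or be simultaneously generated by a symmetric candidate on the $u_2$-side. The disjointness restriction $U_1\cap N_G(u_2)=\emptyset$ is intended to filter out precisely those candidates for which a shared neighbor $y\in N_G(u_1)\cap N_G(u_2)$ would create an obstruction of this kind; the same intuition governs the $u_2$-side condition. I would argue that when this intersection is empty there is no vertex that could have witnessed $\{u_2\}\cup U_1$ already in $\mathcal{N}(G)$, and when the intersection is non-empty the offending element $y$ gives a path of length three between $u_1$ and $u_2$ through $y$ and another witness, matching the obstruction analysed in the previous theorem.

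The hardest step will be justifying that the disjointness restriction exactly captures the new contributions, with no over- or under-counting. My plan for this is a case analysis: for each candidate $\{u_2\}\cup U_1$, determine whether $U_1$ meets $N_G(u_2)$, and in each sub-case track whether the set was already in $\mathcal{N}(G)$ or coincides with a candidate from the symmetric sum. If the disjointness condition cleanly separates the truly new sets from those with a parallel representation, the bookkeeping collapses and the identity of the theorem follows by adding the generating functions for the two shapes to $N(G,x)$. This final verification is the step I expect to require the most care, since it is precisely where the more restrictive hypothesis of Theorem~\ref{theo:edge-insertion} has been dropped and the generic interaction between $N_G(u_1)$ and $N_G(u_2)$ has to be handled directly.
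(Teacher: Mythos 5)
Your decomposition is the same as the paper's: every $W\in\mathcal{N}(G')\setminus\mathcal{N}(G)$ must be witnessed by $u_1$ or $u_2$ (they are the only vertices whose neighborhoods change) and must contain the opposite endpoint, hence has the form $\{u_2\}\cup U_1$ with $\emptyset\neq U_1\subseteq N_G(u_1)$ or $\{u_1\}\cup U_2$ with $\emptyset\neq U_2\subseteq N_G(u_2)$; your worry about the two shapes colliding is unfounded, since $u_1\notin N_G(u_1)$. The genuine gap is exactly the step you flag as hardest: the claim that the restriction $U_1\cap N_G(u_2)=\emptyset$ separates the genuinely new sets from those already in $\mathcal{N}(G)$. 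No case analysis will close this, because the claim is false. The set $\{u_2\}\cup U_1$ already lies in $\mathcal{N}(G)$ if and only if it has a common neighbor there, i.e.\ if and only if $N_G(u_2)\cap\bigcap_{y\in U_1}N_G(y)\neq\emptyset$; this is a condition on intersecting \emph{neighborhoods}, not on $U_1$ meeting $N_G(u_2)$. Concretely, let $G$ be the path $u_1,a,b,u_2$ and $U_1=\{a\}$: then $U_1\cap N_G(u_2)=\{a\}\cap\{b\}=\emptyset$, so the sum counts $\{a,u_2\}$ as new, yet $b$ already witnesses $\{a,u_2\}\in\mathcal{N}(G)$, and the formula predicts $N(C_4,x)=1+4x+4x^2$ instead of $1+4x+2x^2$. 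Conversely, for $G$ the path $u_1,y,u_2$ and $U_1=\{y\}$ the condition fails and the term is dropped, yet $\{y,u_2\}$ is genuinely new in $G'=K_3$. So the side condition both over- and under-counts, and your proposed "clean separation" cannot exist.

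Your supporting intuition also misidentifies the obstruction: an element $y\in U_1\cap N_G(u_2)$ yields a path of length two $u_1,y,u_2$, whereas the obstruction isolated in Theorem \ref{theo:edge-insertion} is a path of length three $u_1,y,w,u_2$, which corresponds precisely to a vertex $w$ adjacent to $u_2$ and to the members of $U_1$, i.e.\ to a common neighbor of $\{u_2\}\cup U_1$ in $G$. To be fair, the paper's own proof has the same mismatch: its prose correctly restricts to those $U_1$ for which $U_1\cup\{u_2\}$ has no common neighbor in $G$, but the displayed condition $U_1\cap N_G(u_2)=\emptyset$ does not express this, so the theorem as printed is incorrect. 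Your argument (and the paper's) goes through once the side conditions are replaced by $N_G(u_2)\cap\bigcap_{y\in U_1}N_G(y)=\emptyset$ and $N_G(u_1)\cap\bigcap_{y\in U_2}N_G(y)=\emptyset$; you should prove that corrected statement rather than attempt to justify the disjointness condition as written.
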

\begin{proof}
	If $X\in \mathcal{N}(G)$ then $X\in \mathcal{N}(G^{\prime})$ and all such 
	vertex subsets are generated by $N(G,x)$. After insertion of the edge 
	$\{u_{1},u_{2}\}$, the vertices in any subset of the open neighborhood of 
	$u_{1}$ together with $u_{2}$ have a common neighbor in $G^{\prime}$ 
	(which is the vertex $u_{1}$), and analogously the vertices in any subset 
	of the open neighborhood of $u_{2}$ together with $u_{1}$ share a neighbor in 
	$G^{\prime}$ (which is the vertex $u_{2}$). Any of such vertex sets adds a  
	term like $x x^{|U|}$ to the neighborhood polynomial of $G^{\prime}$, 
	where $x$ represents $u_{1}$ or $u_{2}$, and $x^{|U|}$ stands for $U$ as a 
	subset of $N_{G}(u_{2})$ or $N_{G}(u_{1})$, respectively. In order to 
	prevent counting a vertex set of this form that already has a common neighbor 
	in $G$ (and therefore being counted in $N(G,x)$) we consider only those subsets 
	of $N_{G}(u_{2})$ or $N_{G}(u_{1})$ which do not have a common neighbor with 
	$u_{1}$ or $u_{2}$ in $G$, respectively. All such vertex subsets are generated 
	by the term 
	\[ 
		x\hspace{-1em}\sum_{\substack{\emptyset \neq U_{1}\subseteq N_{G}(u_{1})\\
				U_{1}\cap N_{G}(u_{2})=\emptyset}}\hspace{-1em} x^{|U_{1}|} 
		+ x \hspace{-1em}\sum_{\substack{\emptyset \neq U_{2}\subseteq N_{G}(u_{2})\\
				 U_{2}\cap N_{G}(u_{1})=\emptyset}}\hspace{-1em} x^{|U_{2}|},
	\]
	which completes the proof.
\end{proof}

\subsection{Vertex Attachment}

Suppose $G=(V,E)$ is a simple graph and let $U\subseteq V$. Let 
\[ 
	G_{U\rhd v} = (V\cup \{v\},E\cup \{\{u,v\}\mid u\in U \})
\]
be the graph obtained 
from $G$ by adding a new vertex $v$ to $V$ and attaching $v$ to all vertices of
$U$, so the degree of $v$ in $G_{U\rhd v}$ is $|U|$; in this case the set 
$U$ is called the \emph{vertex attachment set}. In case of $U=\emptyset$, the graph 
$G_{U\rhd v}$ is the disjoint union of $G$ and the single vertex $v$, which implies
$\mathcal{N}(G_{U\rhd v})=\mathcal{N}(G)$ and therefore $N(G_{U\rhd v},x)=N(G,x)$. 
In the following theorem, we investigate the neighborhood polynomial of $G_{U\rhd v}$ 
in case of $U\neq \emptyset$.

\begin{theorem}\label{theo:v-attach}
	Let $G=(V,E)$ be a graph and $U\subseteq V$ be a non-empty set. For each vertex
	subset $W\subseteq U$, we define a monomial $A_W$ by
	\begin{equation*}
		A_W = \left\lbrace
			\begin{array}{l}
				x^{|W|} \text{ if } \bigcap_{w\in W}N_{G}(w)=\emptyset,\\
				0 \text{ otherwise.}		
			\end{array}
		\right.
	\end{equation*}
	Then	
	\begin{equation*}
		N(G_{U\rhd v},x) = N(G,x)+ \sum_{\substack{W\subseteq U \\ 
			W\neq\emptyset}} (-1)^{|W|+1} 
		x(1+x)^{|\cap_{w\in W}N_{G}(w)|}
		 	+ \sum_{\substack{W\subseteq U\\W\neq\emptyset}} 
		 	 A_W.
	\end{equation*}
\end{theorem}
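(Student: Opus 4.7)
The plan is to split the sum $\sum_{X \in \mathcal{N}(G_{U\rhd v})} x^{|X|}$ according to whether the newly attached vertex $v$ belongs to $X$. The observation driving the case analysis is that passing from $G$ to $G_{U\rhd v}$ only modifies the open neighborhoods of the vertices in $U$ (each gains $v$) and adds the new neighborhood $N_{G_{U\rhd v}}(v) = U$; every other open neighborhood is unchanged.

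First, I would establish the following characterization. If $v \notin X$, then $X \in \mathcal{N}(G_{U\rhd v})$ iff either $X \in \mathcal{N}(G)$ (any witness $y \in V$ still works, since the added edges only extend neighborhoods by $v$, and $v \notin X$) or $X \subseteq U$ (using $v$ itself as the common neighbor). If $v \in X$, writing $Y = X \setminus \{v\} \subseteq V$, then $X \in \mathcal{N}(G_{U\rhd v})$ iff some $u \in U$ satisfies $Y \subseteq N_G(u)$, since the only vertices adjacent to $v$ in $G_{U\rhd v}$ lie in $U$ and, for such a $u$, $N_{G_{U\rhd v}}(u) = N_G(u) \cup \{v\}$.

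For the case $v \in X$, I would apply inclusion--exclusion to the family of subsets $Y \subseteq V$ contained in at least one $N_G(u)$ with $u \in U$. Since the condition $Y \subseteq N_G(w)$ for every $w \in W$ is equivalent to $Y \subseteq \bigcap_{w \in W} N_G(w)$, the generating function is $\sum_{\emptyset \neq W \subseteq U} (-1)^{|W|+1} (1+x)^{|\bigcap_{w\in W} N_G(w)|}$; multiplying by $x$ to account for $v$ itself produces the second summand of the claimed formula.

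For the case $v \notin X$, the family $\mathcal{N}(G)$ already contributes exactly $N(G,x)$, so only the subsets of $U$ that are not yet in $\mathcal{N}(G)$ remain to be added. The equivalence $W \in \mathcal{N}(G) \Leftrightarrow \bigcap_{w\in W}N_G(w)\neq\emptyset$ shows that these missing sets are precisely the non-empty $W\subseteq U$ with $\bigcap_{w\in W}N_G(w)=\emptyset$, which is exactly $\sum_{\emptyset\neq W\subseteq U} A_W$. Overlap between the two cases is excluded automatically by the status of $v$, so the main care required is in setting up the inclusion--exclusion correctly and in verifying that the $A_W$ correction captures exactly the subsets of $U$ newly admitted because of the common neighbor $v$; that combinatorial bookkeeping is the only substantive obstacle.
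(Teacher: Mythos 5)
Your proposal is correct and follows essentially the same route as the paper's proof: the split on whether $v\in X$ (and, when $v\notin X$, on whether $v$ is the only witness) reproduces the paper's cases (a), (b), (c), with the same inclusion--exclusion over non-empty $W\subseteq U$ for the sets containing $v$ and the same $A_W$ correction for subsets of $U$ newly admitted via $v$. Your version merely makes the membership characterizations slightly more explicit.
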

\begin{proof}
	Assume $X\in \mathcal{N}(G_{U\rhd v})$, which implies that the vertices of 
	$X$ have a common neighbor in $G_{U\rhd v}$. Then, we can distinguish the 
	following cases:
	\begin{enumerate}
		\item[(a)] Assume $v\notin X$ and $v$ is not a common neighbor of the 
		vertices in $X$. In this case $X\in \mathcal{N}(G)$ and all possibilities 
		for the choice of $X$ are counted in $N(G,x)$.
		\item[(b)] Assume $v\notin X$ but $v$ is a common neighbor of the vertices 
		in $X$. Since the only neighbors of $v$ are the vertices in $U$, then we 
		only need to count those subsets of $U$ which do not have a common neighbor 
		in $G$. We do so by
		\[ 
			\sum_{\substack{W\subseteq U\\W\neq\emptyset}} A_W,
		\]
		 where 
		\begin{equation*}
		A_W = \left\lbrace
		\begin{array}{l}
		x^{|W|} \text{ if } \bigcap_{w\in W}N_{G}(w)=\emptyset,\\
		0 \text{ otherwise.}		
		\end{array}
		\right.
		\end{equation*}
		\item[(c)] Finally assume $v\in X$. In this case, we need to count all vertex 
		subsets of the open neighborhoods of vertices $u\in U$ where each one of 
		those subsets together with $v$ forms a vertex subset with $u$ as their 
		common neighbor in $G_{U\rhd v}$. To exclude double counting subsets of 
		intersections of the neighborhoods of $u$'s, we use the principle of 
		inclusion-exclusion. Such subsets are generated by
		\[ 
			\sum_{\substack{W\subseteq U \\ W\neq\emptyset}} (-1)^{|W|+1} 
			x(1+x)^{|\cap_{w\in W}N_{G}(w)|},
		\] 
		where the factor $x$ accounts for the vertex $v$ itself. 
	\end{enumerate}
	Finally,the arguments in (a), (b), and (c) together prove the theorem.
	\end{proof}

In the following corollary we rephrase Theorem \ref{theo:v-attach} in a way that 
the neighborhood polynomial of a graph resulting from vertex removal (instead of
vertex attachment) is investigated.

\begin{corollary}\label{coro:vertex-removal}
	Let $G=(V,E)$ be a graph and $v\in V$. Then we have
	\begin{equation*}		
		N(G,x) = N(G-v,x) +
		\hspace{-0.5em}\sum_ {\substack{U\subseteq N_{G}(v)\\ U\neq \emptyset}} 
		\hspace{-0.5em}(-1)^{|U|+1} x(1+x)^{|\cap_{u \in U}N_{G-v}(u)|}
		+ \hspace{-0.5em}
		\sum_ {\substack{U\subseteq N_{G}(v)\\ U\neq \emptyset}} 
		\hspace{-0.5em}A_{U},	
	\end{equation*}
	where for $U\subseteq N_{G}(v)$ \\
	\begin{equation*}
		A_{U} = 
			\left\lbrace
				\begin{array}{l}
					x^{|U|} \text{ if } \bigcap_{u\in U}N_{G-v}(u)=\emptyset, \\
					0 \text{ otherwise.}		
				\end{array}
			\right.
	\end{equation*}
\end{corollary}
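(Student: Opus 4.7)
The plan is to obtain this corollary as an immediate reformulation of Theorem \ref{theo:v-attach}. The key observation is that if we set $H = G - v$ and take as attachment set $U = N_G(v) \subseteq V(H)$, then the vertex-attachment construction applied to $H$ exactly reproduces $G$; that is, $G = H_{U \rhd v}$. Indeed, the vertex $v$ is reinserted and joined precisely to its original neighbors in $G$, recreating all edges of $G$ incident to $v$, while the edges among vertices of $V \setminus \{v\}$ are already present in $H$.

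Given this identification, I would simply substitute into the formula of Theorem \ref{theo:v-attach}, with the theorem's $G$ replaced by $H = G - v$ and the theorem's $U$ taken to be $N_G(v)$. The left-hand side $N(H_{U \rhd v},x)$ becomes $N(G,x)$, the leading term $N(H,x)$ on the right becomes $N(G-v,x)$, and the neighborhoods $N_H(w)$ appearing in both sums become $N_{G-v}(w)$. Summing indices $W$ over nonempty subsets of $N_G(v)$ and renaming them $U$ yields exactly the expression in the statement, including the piecewise definition of $A_U$.

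The only subtlety worth checking is the boundary case $N_G(v) = \emptyset$, in which $v$ is isolated in $G$. Then by the third bullet in the list of properties of $\mathcal{N}(G)$ recalled in the introduction we have $\mathcal{N}(G) = \mathcal{N}(G-v)$, hence $N(G,x) = N(G-v,x)$; both sums on the right-hand side of the corollary are empty, so the identity holds trivially. When $N_G(v) \neq \emptyset$, Theorem \ref{theo:v-attach} applies directly. There is essentially no obstacle: the corollary is a pure reparametrization of the vertex-attachment formula into the language of vertex deletion, and the entire proof consists of writing $G = (G-v)_{N_G(v) \rhd v}$ and invoking the theorem.
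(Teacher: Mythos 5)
Your proposal is correct and is exactly the argument the paper intends: the corollary is stated as a rephrasing of Theorem \ref{theo:v-attach} obtained by writing $G=(G-v)_{N_G(v)\rhd v}$ and substituting, and your treatment of the isolated-vertex case $N_G(v)=\emptyset$ matches the remark preceding that theorem.
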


\section{The Neighborhood Polynomial of $k$-degenerate Graphs}\label{sect:degenerate}

Corollary \ref{coro:vertex-removal} suggests a recursion for the neighborhood 
polynomial of a graph based on removing the vertices of the graph one by one. 
In this section, we introduce $k$-degenerate graphs in which the calculation of 
the neighborhood polynomial is efficiently possible using the mentioned recursion.

A simple undirected graph $G=(V,E)$ of order $n$ is called \emph{$k$-degenerate} 
if every non-empty subgraph of $G$, including $G$ itself, has at least one vertex 
of degree at most $k$ for $0<k<n$.
Suppose $G=(V,E)$ is a $k$-degenerate graph of order $n$. To apply 
Corollary \ref{coro:vertex-removal} we need to select a vertex in $G$ to remove. 
Since $G$ is $k$-degenerate, the existence of a vertex of degree at most $k$ is 
guaranteed. Suppose $v_{1}$ is a vertex of degree $k$ in $G$, then we remove it 
and by using Corollary \ref{coro:vertex-removal}, we have 
\begin{align*}
	N(G,x) &= N(G-v_{1},x) + \sum_ {\substack{U\subseteq N_{G}(v_{1})\\ 
	U\neq \emptyset}} (-1)^{|U|+1} x(1+x)^{|\cap_{u \in U}N_{G-v_{1}}(u)|}\\
	&+\sum_ {\substack{U\subseteq N_{G}(v_{1})\\ U\neq \emptyset}} A_{U}.
\end{align*}

We define $G_{n}:=G$ where $n$ is the number of vertices of $G$. For each $i$, 
$0<i<n$, let $G_{n-i}$ be the graph obtained from 
$G_{n-(i-1)}$ after removing a vertex $v_{i}$ of degree at most $k$.  
The existence of such vertex is guaranteed by $k$-degeneracy of graph $G$.
We define for every non-empty $U\subseteq N_{G_{n-(i-1)}}(v_{i})$
\begin{equation*}
A_{U} = 
\left\lbrace
\begin{array}{l}
x^{|U|} \text{ if } \bigcap_{u\in U}N_{G_{n-i}}(u)=\emptyset, \\
0 \text{ otherwise.}		
\end{array}
\right.
\end{equation*} 
and for each $i$, $0<i<n$
\begin{equation*}
X_{i}=\sum _{U\subseteq N_{G_{n-(i-1)}}(v_{i}), U\neq \emptyset} [(-1)^{|U|+1}x(1+x)^{|\cap_{u\in U} N_{G_{n-i}}(u)|}+A_{U}]   
\end{equation*}
Then we have
\begin{align*}
	N(G_{n},x) &= N(G_{n-1},x)+X_{1} \\
	&= (N(G_{n-2},x)+X_{2})+X_{1} \\
	&\quad\vdots\\
	&=(\cdots((N(G_{1},x)+X_{n-1})+X_{n-2})+ \cdots )+X_{1})\\
	&= 1 + \sum_{i=1}^{n-1} X_{i}
\end{align*}
where the last equation is a result of the fact that $G_{1}$ is nothing than a 
single vertex which has neighborhood polynomial $1$.

Clearly, in each step (in other words for each $i$, $0<i<n$), we remove a vertex 
$v_{i}$ of degree at most $k$ (where its existence is guaranteed due to 
$k$-degeneracy of $G$) and calculate the term $X_{i}$ that is a sum over all 
non-empty subsets of $N_{G_{n-(i-1)}}(v_{i})$, but since this set has at most 
$k$ elements due to the degree of $v_{i}$ in $G_{n-(i-1)}$ the calculation of 
$N(G,x)$ can be done in polynomial time for any fixed $k$ which yields the following theorem.
\begin{theorem}\label{theo:k-degenerate}
	Let $k$ be a fixed positive integer. The calculation of the neighborhood 
	polynomial can be performed in polynomial-time in the class of $k$-degenerate
	graphs.
\end{theorem}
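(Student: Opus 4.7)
The plan is to turn the recursion derived from Corollary \ref{coro:vertex-removal} into an explicit elimination algorithm and then bound its running time. First I would observe that $k$-degeneracy is hereditary: every induced subgraph of a $k$-degenerate graph is again $k$-degenerate, so the sequence of graphs $G_n, G_{n-1}, \dots, G_1$ obtained by repeatedly removing a vertex of degree at most $k$ is well defined. This is exactly the iterated scheme already set up in the paragraphs preceding the theorem, yielding
\[
	N(G,x) \;=\; 1 + \sum_{i=1}^{n-1} X_i,
\]
so the task reduces to showing that each $X_i$, together with the overhead of choosing the vertex $v_i$ and maintaining the adjacency information, can be computed in time polynomial in $n$.

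Next I would bound the cost of computing a single $X_i$. Since $\deg_{G_{n-(i-1)}}(v_i)\le k$, there are fewer than $2^k$ non-empty subsets $U\subseteq N_{G_{n-(i-1)}}(v_i)$ to enumerate. For each such $U$, forming the intersection $\bigcap_{u\in U}N_{G_{n-i}}(u)$ costs at most $O(kn)$ operations on bitsets or sorted lists, after which the contribution to $X_i$ is either the monomial $x^{|U|}$ or the polynomial $\pm x(1+x)^m$ with $m\le n$, both expandable into $O(n)$ coefficients. Thus $X_i$ is assembled in $O(2^k\cdot kn)$ time, and adding it to the running total costs another $O(n)$. The vertex $v_i$ of small degree can be located either by rescanning degrees in $O(n+m)=O(kn)$ time per step, or once and for all at the start via a standard smallest-last ordering in linear time. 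Multiplying by the $n-1$ elimination steps gives an overall bound of the form $O(2^k kn^2)$, which is polynomial in $n$ for every fixed $k$.

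I do not anticipate a real obstacle here; the content of the theorem is essentially a running-time audit of the recursion already stated. The only point requiring a little care is checking that the hypothesis of Corollary \ref{coro:vertex-removal} remains available after each deletion, which is exactly where the hereditary nature of $k$-degeneracy is used, and that the $2^k$ appearing in the per-step count is absorbed into the constant because $k$ is fixed.
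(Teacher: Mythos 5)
Your proposal is correct and follows essentially the same route as the paper: iterate Corollary \ref{coro:vertex-removal} along a degeneracy ordering, obtain $N(G,x)=1+\sum_{i=1}^{n-1}X_i$, and note that each $X_i$ ranges over at most $2^k$ subsets, so the total cost is polynomial for fixed $k$. Your version merely makes the per-step cost accounting (the $O(2^k kn)$ bound and the maintenance of the ordering) more explicit than the paper does.
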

The recursive procedure for the calculation of the neighborhood polynomial becomes for graphs with
regular structure especially simple. We give here an example of a $2\times n$-grid graph that can
easily be generalized to similar graphs with regular structure.

\begin{figure}[ht]
	\centering
	\includegraphics[scale=0.9]{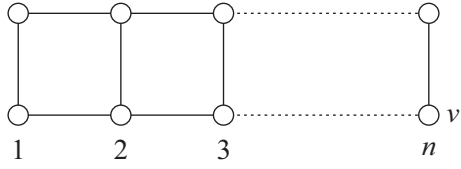}
	\caption{A ladder graph $L_n$}
	\label{fig:ladder}
\end{figure}

\begin{figure}[ht]
	\centering
	\includegraphics[scale=0.9]{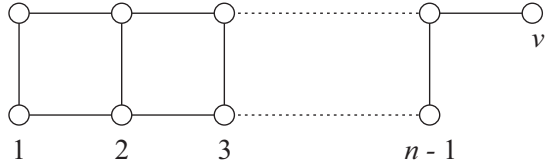}
	\caption{A modified ladder graph $M_n$}
	\label{fig:ladder1}
\end{figure}

\begin{example}
A \emph{ladder graph} of order $2n$, denoted by $L_n$, is the Cartesian product of two 
path graphs $P_{2}$ and $P_{n}$, see Figure \ref{fig:ladder}. The neighborhood polynomial of $L_n$ is
\begin{equation*}
    N(L_n,x) = 1 + 4x +2x^2 +(2n-4)x(1+x)^2 \quad \text{ for }n\geq 4.
\end{equation*}
To prove this equation we can calculate the neighborhood polynomial of $L_n$ by applying 
Theorem \ref{theo:cartesian}, but since $L_n$ is $2$-degenerate, we can also apply Corollary \ref{coro:vertex-removal}. 
To do so, we need to specify a vertex to remove. In the following, we remove a vertex of 
degree $2$, denoted by $v$ in Figure \ref{fig:ladder}. After removing the vertex $v$, 
we obtain a modified ladder graph that we denote by $M_n$, see Figure 
\ref{fig:ladder1}. The graph $M_n$ has exactly one vertex of degree $1$. This is the vertex 
to be removed in the next step; it is denoted by $v$ in Figure \ref{fig:ladder1}. 
 
By Corollary \ref{coro:vertex-removal}, we obtain
\begin{equation}
 	N(L_n,x) = N(M_n,x) + x(1+x)^2 \quad \text{ for }n\geq 4
 	\label{eq:L}
\end{equation}
and
\begin{equation}
 	N(M_n,x) = N(L_{n-1},x) + x(1+x)^2 \quad \text{ for }n\geq 5.
 	\label{eq:M}
\end{equation}
Substituting $N(M_n,x)$ in Equation (\ref{eq:L}) according to Equation (\ref{eq:M}) yields
\begin{equation}
 	N(L_n,x) = N(L_{n-1},x) + 2x(1+x)^2 \quad \text{ for }n\geq 4,
\end{equation}
which provides together with the initial value
\[ N(L_2,x) = N(C_4,x) =1+4x+2x^2 \] 
the above given result.
\end{example}

As any $k$-regular graph is $k$-degenerate, we obtain the following result.
\begin{corollary}
	Let $k$ be a fixed positive integer. The calculation of the neighborhood 
	polynomial can be performed in polynomial-time in the class of $k$-regular graphs.
\end{corollary}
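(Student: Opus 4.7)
The plan is to reduce the corollary immediately to Theorem \ref{theo:k-degenerate}. The only thing to verify is the inclusion $k\text{-regular}\subseteq k\text{-degenerate}$, which is what the sentence preceding the corollary already asserts.

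First I would recall the definition: a graph $G$ is $k$-regular if every vertex has degree exactly $k$. Let $H$ be any non-empty subgraph of $G$, and let $v\in V(H)$. Since $H$ is obtained from $G$ by deleting some vertices and possibly some edges, the degree of $v$ in $H$ is at most its degree in $G$, i.e.\ $\deg_H(v)\leq \deg_G(v)=k$. Hence every non-empty subgraph of $G$ contains a vertex of degree at most $k$ (in fact, every vertex qualifies), so $G$ is $k$-degenerate.

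Having established that the class of $k$-regular graphs is contained in the class of $k$-degenerate graphs, I would then invoke Theorem \ref{theo:k-degenerate} directly: since the neighborhood polynomial can be computed in polynomial time for every $k$-degenerate graph when $k$ is a fixed positive integer, the same is true for the subclass of $k$-regular graphs. This completes the argument.

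There is essentially no obstacle here; the corollary is a transparent specialization of Theorem \ref{theo:k-degenerate}. The only subtlety worth stating explicitly (to keep the paper self-contained) is that the notion of ``subgraph'' used in the definition of $k$-degeneracy is the natural one under which degrees can only decrease — an observation that is immediate from the definition but is the single logical step of the proof.
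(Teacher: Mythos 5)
Your proposal is correct and matches the paper's argument exactly: the paper likewise notes that any $k$-regular graph is $k$-degenerate (since in every subgraph degrees can only decrease below $k$) and then applies Theorem \ref{theo:k-degenerate}. Your explicit verification of the inclusion is a harmless elaboration of the one-line justification the paper gives.
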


As a consequence of Euler's polyhedron formula, each simple planar graph contains
a vertex of degree at most 5. This implies that any simple planar graph is 
5-degenerate, which provides the next statement.
\begin{corollary}
	The neighborhood polynomial of a simple planar graph can be found in polynomial-time.
\end{corollary}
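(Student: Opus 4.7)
The plan is to reduce the corollary to Theorem~\ref{theo:k-degenerate} by showing that every simple planar graph is $5$-degenerate. I would first recall Euler's polyhedron formula: for any simple connected planar graph with $V$ vertices, $E$ edges, and $F$ faces, we have $V - E + F = 2$. Combining this with the fact that each face of a simple planar graph is bounded by at least three edges (so that $2E \geq 3F$) yields the standard bound $E \leq 3V - 6$.

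Next I would apply the handshake lemma to derive a minimum-degree bound. If every vertex of a simple planar graph had degree at least $6$, then $2E = \sum_{v\in V} \deg(v) \geq 6V$, contradicting $E \leq 3V - 6$. Hence every simple planar graph must contain a vertex of degree at most $5$. The key observation — and the only point that requires any attention — is that this property must be invoked \emph{hereditarily}: the class of planar graphs is closed under taking induced subgraphs, since deleting a vertex preserves planarity. Therefore every non-empty subgraph of a simple planar graph has a vertex of degree at most $5$, which is exactly the definition of $5$-degeneracy.

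With $5$-degeneracy established, the corollary follows immediately from Theorem~\ref{theo:k-degenerate} applied with the fixed constant $k=5$: the recursive vertex-removal procedure of Section~\ref{sect:degenerate} always finds a vertex of degree at most $5$ to delete, and each application of Corollary~\ref{coro:vertex-removal} contributes a sum over the at most $2^5 = 32$ subsets of its neighborhood, giving an overall polynomial-time algorithm. There is no real obstacle here; the only subtlety worth emphasising in the write-up is the hereditary nature of planarity, since $k$-degeneracy is a condition on \emph{all} subgraphs, not just on $G$ itself.
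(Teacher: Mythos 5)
Your proof is correct and follows exactly the paper's route: Euler's formula gives a vertex of degree at most $5$, planarity is hereditary so the graph is $5$-degenerate, and Theorem~\ref{theo:k-degenerate} then applies with $k=5$. You spell out the hereditary step more explicitly than the paper does, but the argument is the same.
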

There is an interesting generalization of planar graphs which was introduced in \cite{Gubser1996} that belongs 
to the class of $k$-degenerate graphs, too. An \emph{almost planar graph} is a non-planar graph $G=(V,E)$ 
in which for every edge $e\in E$, at least one of the graphs $G-e$ (obtained from $G$ after removing $e$) 
and $G/e$ (obtained from $G$ by contraction of $e$) is planar. It can be easily shown that every finite 
almost-planar graph is 6-degenerate, which implies that its neighborhood polynomial can be efficiently 
calculated.

\section{Conclusions and Open Problems}

The presented decomposition and reduction methods for the calculation of the neighborhood polynomial work
well for graphs of bounded degree or, more generally, for $k$-degenerate graphs. They can easily be extended
to derive the neighborhood polynomials of further graphs with regular structure such as grid graphs with
additional diagonal edges. The splitting formula for vertex separators also suggests that the neigborhood 
polynomial should be polynomial-time computable in the class of graphs of bounded treewidth.

A main open problem remains the calculation of the neighborhood polynomial of graphs for which the
number of edges is not linearly bounded by its order. Can we find an efficient way to calculate the 
neighborhood polynomial of graphs of bounded clique-width?

\printbibliography

\end{document}